\documentclass[reqno]{amsart}
\usepackage{amssymb}
\usepackage{hyperref}
\usepackage{mathrsfs}

\AtBeginDocument{{\noindent\small
\sc{Journal of Mathematical Analysis}\newline
ISSN:  2217-3412, URL: http://www.ilirias.com\newline
Volume XX Issue XX(2015), Pages XX-XX.}

\thanks{\copyright 2010 Ilirias Publications, Prishtin\"e, Kosov\"e.}
\vspace{9mm}}

\begin{document}
\title[ Bivariate Chebyshev on Simplicial]
{Bivariate Chebyshev-I Weighted Orthogonal Polynomials on Simplicial Domains}

\author[M.A. AlQudah]
{Mohammad A. AlQudah}  

\address{Mohammad A. AlQudah \newline
Department of Mathematics, Northwood University, Midland MI 48640 USA}
\email{alqudahm@northwood.edu}

\thanks{Submitted January 6, 2015. Published January 2, 2008.}

\subjclass[2000]{42C05, 33C45, 33C70}
\keywords{Chebyshev, Orthogonal, Triangular, Bernstein, Bivariate Polynomials}

\begin{abstract}
We construct a simple closed-form representation of degree-ordered system of
bivariate Chebyshev-I orthogonal polynomials $\mathscr{T}_{n,r}(u,v,w)$  on simplicial domains. 
We show that these polynomials $\mathscr{T}_{n,r}(u,v,w),$
$r=0,1,\dots,n;$ $n\geq 0$ form an orthogonal system with respect to the Chebyshev-I weight function.
\end{abstract}

\maketitle
\numberwithin{equation}{section}
\newtheorem{theorem}{Theorem}[section]
\newtheorem{lemma}[theorem]{Lemma}
\newtheorem{proposition}[theorem]{Proposition}
\newtheorem{corollary}[theorem]{Corollary}
\newtheorem*{remark}{Remark}
\newtheorem{definition}[theorem]{Definition}

\section{Introduction and Motivation}
Orthogonal polynomials have been studied thoroughly \cite{Olver,Szego}, the Chebyshev polynomials of the first kind (Chebyshev-I) are among these polynomials.
Although the main definitions and basic properties were considered many years ago \cite{Dunkl,Suetin}, the cases of bivariate
or more variables have been studied by few scholars. 

Chebyshev-I polynomials ($T_{n,r}(u,v,w)$) are defined in many textbooks \cite{Abramowitz,Olver}, they  are orthogonal to each polynomial
of degree $\leq n-1,$ with respect to the defined weight function. However, for $r\neq s,$ $T_{n,r}(u,v,w)$ and $T_{n,s}(u,v,w)$ are not orthogonal with respect to the weight function.

Farouki \cite{Farouki} defined orthogonal polynomials with respect to the weight function $\mathrm{W}(u,v,w)=1$ on a triangular domain.
These polynomials $P_{n,r}(u,v,w)$ are orthogonal to each polynomial of degree $\leq n-1,$ and also orthogonal to each polynomial $P_{n,s}(u,v,w),$ where $r\neq s.$

In this paper, we construct bivariate Chebyshev-I weighted orthogonal polynomials $\mathscr{T}_{n,r}(u,v,w)$ with respect to the weight function
$\mathrm{W}(u,v,w)=u^{-\frac{1}{2}}v^{-\frac{1}{2}}(1-w)^\gamma, \gamma\geq 1,$ on triangular domain $T.$ We show that these bivariate polynomials $\mathscr{T}_{n,r}(u,v,w),$ $r = 0,1,\dots ,n;$ $n\geq 0,$ 
form an orthogonal system over $T$ with respect to $\mathrm{W}(u,v,w)=u^{-\frac{1}{2}}v^{-\frac{1}{2}}(1- w)^\gamma, \gamma\geq 1.$

\subsection{Bernstein and Orthogonal polynomials}\label{Univar-Ultra-Poly}
Consider a triangle $T$ defined by its three vertices $\textbf{p}_{k}=(x_{k},y_{k}),$  $k=1,2,3.$
For each point \textbf{p} located inside the triangle, there is a sequence of three numbers $u,v,w\geq 0$ such that \textbf{p} 
can be written uniquely as a convex combination of the three vertices, $\textbf{p}=u\textbf{p}_{1}+v\textbf{p}_{2}+w\textbf{p}_{3},$
where $u+v+w=1.$
The three numbers $u=\frac{area(\textbf{p},\textbf{p}_{2},\textbf{p}_{3})}{area(\textbf{p}_{1},\textbf{p}_{2},\textbf{p}_{3})},$ $v=\frac{
area(\textbf{p}_{1},\textbf{p},\textbf{p}_{3})}{area(\textbf{p}_{1},\textbf{p}_{2},\textbf{p}_{3})},$ $w=\frac{area(\textbf{p}_{1},\textbf{p}_{2},\textbf{p})}{%
area(\textbf{p}_{1},\textbf{p}_{2},\textbf{p}_{3})}$ indicate the barycentric "area" coordinates of the point \textbf{p} with respect to the triangle.

Although there are three coordinates, there are only two degrees of freedom, since $u+v+w=1.$ Thus every point is uniquely defined by
any two of the barycentric coordinates. That is, the triangular domain defined as $$T=\{(u,v,w):u,v,w\geq 0, u+v+w=1\}.$$ 

\begin{definition}The $n+1$ Bernstein polynomials of degree $n$ are defined by 
\begin{equation}\label{bern}
B_{i}^{n}(u)=\binom {n}{i} u^{i}(1-u)^{n-i}, \hspace{.1in}\text{for}\hspace{.1in} i=0,1,\dots,n,
\end{equation}
where $\binom{n}{i}$ is the binomial coefficients. For $\zeta=(i,j,k)$ denote triples of non-negative integers such that $\left|\zeta\right|=i+j+k,$ then the generalized Bernstein polynomials of
degree $n$ are defined by the formula
$$B_{\zeta}^{n}(u,v,w)=\binom{n}{\zeta}u^{i}v^{j}w^{k},\hspace{.1in}\left|\zeta\right|=n,\hspace{.1in}\text{where}\hspace{.1in}\binom{n}{\zeta}=\frac{n!}{i!j!k!}.$$
\end{definition}
The generalized Bernstein polynomials have a number of useful  analytical and elegant geometric properties \cite{Farin}. In addition, the Bernstein basis polynomials of degree $n$ form a basis for the space $\Pi_{n}$ of polynomials of degree at most $n.$
Thus, with the revolt of computer graphics, Bernstein polynomials on $[0,1]$ became important in the form of B\'{e}zier curves,
and the polynomials determined in the Bernstein (B\'{e}zier) basis enjoy considerable popularity in computer-aided Geometric design (CAGD) applications.

Degree elevation is a common situation in these applications, where polynomials given in the basis of degree $n$ have to be represented in the basis of higher degree.
For any polynomial $p(u,v,w)$ of degree $n$ can be written using B\'{e}zier coefficients $d_{\zeta}$ in the Bernstein form
\begin{equation} \label{deg elevation} p(u,v,w)=\sum_{\left|\zeta\right|=n}d_{\zeta}B_{\zeta}^{n}(u,v,w).\end{equation}
 
With the use of degree elevation algorithm for the Bernstein representation, the polynomial $p(u,v,w)$ in \eqref{deg elevation} can be written
(multiplying both sides by $1=u+v+w$) as
$$p(u,v,w)=\sum_{\left|\zeta\right|=n+1}d_{\zeta}^{(1)}B_{\zeta}^{n+1}(u,v,w).$$
The new coefficients defined in \cite {Farin,Hoschek} as $d_{\zeta}^{(1)}=\frac{1}{n+1}(id_{i-1,j,k}+jd_{i,j-1,k}+kd_{i,j,k-1})$ where $\left|\zeta\right|=n+1.$ 
Moreover, the next integration is one of the interesting analytical properties of the Bernstein polynomials $B_{\zeta}^{n}(u,v,w);$
$$\iint_{T}B_{\zeta}^{n}(u,v,w)dA=\frac{\Delta}{\binom{n+2}{2}},$$
where $\Delta$ is the double the area of $T$ and $\binom{n+2}{2}$  is the dimension of Bernstein
polynomials over the triangle. This means that the Bernstein polynomials partition the
unity with equal integrals over the domain; in other words, they are equally weighted
as basis functions. 

\begin{definition}
Let $p(u,v,w)$ and $q(u,v,w)$ be two bivariate polynomials over $T,$ then we define their inner product over $T$ by
$$\left\langle p,q\right\rangle = \frac{1}{\Delta}\iint_{T}pq dA.$$ 
With the inner product defined, we say that the two polynomials  $p(u,v,w)$ and $q(u,v,w)$  are orthogonal if $\left\langle p,q\right\rangle= 0.$
\end{definition}

For $m \geq 1,$ let $\mathfrak{L_{m}}$ denote the space of polynomials of degree $m$ that are orthogonal to all polynomials of degree $\leq m$
over a triangular domain $T,$ i.e., 
$$\mathfrak{L_{m}}=\{p\in \Pi_{m}:p\perp \Pi_{m-1}\}.$$ 

For an integrable function $f (u,v,w)$ over $T,$ consider the operator
$S_{n}(f)$ defined in \cite{Derriennic} as
$$S_{n}(f)=(n+1)(n+2)\sum_{\left|\zeta\right|=n}\left\langle f,B_{\zeta}^{n} \right\rangle B_{\zeta}^{n}.$$
$\text{For}\hspace{.1in} n \geq m, \hspace{.1in} \lambda_{m,n}=\frac{(n+2)!n!}{(n+m+2)!(n-m)!}$
is an eigenvalue of $S_{n},$ and $\mathfrak{L_{m}}$ is the corresponding eigenspace.
The following lemmas will be used in the proof of the main results. 
\begin{lemma}\cite{Farouki}. Let $p=\sum_{\left|\zeta\right|=n}c_{\zeta}B_{\zeta}^{n}\in\mathfrak{L_{m}}$ and let $q=\sum_{\left|\zeta\right|=n}d_{\zeta}B_{\zeta}^{n}\in \Pi_{n}$ with $m\leq n.$ Then,
$$\left\langle p,q \right\rangle=\frac{(n!)^2}{(n+m+2)!(n-m)!}\sum_{\left|\zeta\right|=n}c_{\zeta}d_{\zeta}.$$
\end{lemma}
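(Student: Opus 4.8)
The plan is to exploit the spectral characterization of $\mathfrak{L_{m}}$ recorded just above the statement: since $p\in\mathfrak{L_{m}}$ and $m\leq n$, the polynomial $p$ lies in the eigenspace of the operator $S_{n}$ associated with the eigenvalue $\lambda_{m,n}$, so that $S_{n}(p)=\lambda_{m,n}\,p$. First I would write this eigenvalue identity out explicitly in terms of Bernstein expansions. By the definition of $S_{n}$,
$$S_{n}(p)=(n+1)(n+2)\sum_{\left|\zeta\right|=n}\left\langle p,B_{\zeta}^{n}\right\rangle B_{\zeta}^{n},$$
while on the other hand $\lambda_{m,n}\,p=\lambda_{m,n}\sum_{\left|\zeta\right|=n}c_{\zeta}B_{\zeta}^{n}$, using the degree-$n$ representation of $p$ furnished in the hypothesis (obtained from its native degree $m\leq n$ by degree elevation).

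Next I would compare coefficients. Because the degree-$n$ Bernstein polynomials $\{B_{\zeta}^{n}:\left|\zeta\right|=n\}$ form a basis for $\Pi_{n}$, the two expansions above must agree term by term, giving
$$(n+1)(n+2)\left\langle p,B_{\zeta}^{n}\right\rangle=\lambda_{m,n}\,c_{\zeta}\qquad(\left|\zeta\right|=n).$$
Solving for the individual inner product and substituting $\lambda_{m,n}=\frac{(n+2)!\,n!}{(n+m+2)!(n-m)!}$, I would simplify the prefactor using $(n+2)!=(n+1)(n+2)\,n!$, which cancels the factor $(n+1)(n+2)$ and yields
$$\left\langle p,B_{\zeta}^{n}\right\rangle=\frac{(n!)^{2}}{(n+m+2)!(n-m)!}\,c_{\zeta}.$$

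Finally I would expand $q=\sum_{\left|\zeta\right|=n}d_{\zeta}B_{\zeta}^{n}$ and invoke bilinearity of the inner product:
$$\left\langle p,q\right\rangle=\sum_{\left|\zeta\right|=n}d_{\zeta}\left\langle p,B_{\zeta}^{n}\right\rangle=\frac{(n!)^{2}}{(n+m+2)!(n-m)!}\sum_{\left|\zeta\right|=n}c_{\zeta}d_{\zeta},$$
which is the asserted formula. The step needing the most care is the coefficient comparison: it relies on the linear independence of the degree-$n$ Bernstein basis, and on reading the eigenvector equation $S_{n}(p)=\lambda_{m,n}\,p$ consistently in the degree-$n$ basis so that the coefficients $c_{\zeta}$ appearing in $p$'s expansion are exactly those produced on the right-hand side. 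The eigenvalue property itself, together with the identification of $\mathfrak{L_{m}}$ as the corresponding eigenspace of $S_{n}$, is quoted from the discussion preceding the lemma and is where the genuine content resides; the remainder is bookkeeping and the elementary factorial simplification.
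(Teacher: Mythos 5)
Your argument is correct and is essentially the standard one: the paper itself states this lemma as a cited result from \cite{Farouki} without reproducing a proof, and your derivation via the eigenvector identity $S_{n}(p)=\lambda_{m,n}p$, coefficient comparison in the degree-$n$ Bernstein basis, and bilinearity is exactly the intended use of the operator $S_{n}$ and its eigenstructure introduced immediately before the lemma. The factorial bookkeeping checks out, and your care about reading the coefficients $c_{\zeta}$ as the degree-elevated representation of $p$ is the right point to flag.
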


\begin{lemma}\label{iffthm} \cite{Farouki}. Let $p=\sum_{\left|\zeta\right|=n}c_{\zeta}B_{\zeta}^{n}\in\Pi_{n}.$ Then we have
\begin{equation}
p\in\mathfrak{L_{n}}\iff \sum_{\left|\zeta\right|=n}c_{\zeta}d_{\zeta}=0 \hspace{.1in} \forall q=\sum_{\left|\zeta\right|=n}d_{\zeta}B_{\zeta}^{n}\in \Pi_{n-1}.
\end{equation}
\end{lemma}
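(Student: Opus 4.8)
The plan is to prove the two implications separately, obtaining the forward one directly from the preceding lemma and the reverse one from a dimension count. For the forward direction, assume $p\in\mathfrak{L_{n}}$ and apply the preceding lemma with $m=n$: since every $q=\sum_{|\zeta|=n}d_{\zeta}B_{\zeta}^{n}\in\Pi_{n-1}$ in particular lies in $\Pi_{n}$, and $p\in\mathfrak{L_{n}}$, that lemma gives
\[
\left\langle p,q\right\rangle=\frac{(n!)^{2}}{(2n+2)!\,0!}\sum_{|\zeta|=n}c_{\zeta}d_{\zeta}=\frac{(n!)^{2}}{(2n+2)!}\sum_{|\zeta|=n}c_{\zeta}d_{\zeta}.
\]
But $p\in\mathfrak{L_{n}}$ means $p\perp\Pi_{n-1}$, so the left-hand side vanishes for every such $q$; since the scalar factor is nonzero, $\sum_{|\zeta|=n}c_{\zeta}d_{\zeta}=0$, which is precisely the asserted condition.

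For the reverse direction I would pass to coefficient vectors. Let $\Phi:\Pi_{n}\to\mathbb{R}^{N}$ with $N=\binom{n+2}{2}$ be the linear isomorphism carrying a polynomial to its degree-$n$ Bernstein coefficient vector, and put $V=\Phi(\Pi_{n-1})$, where $\Pi_{n-1}$ is regarded inside $\Pi_{n}$ via degree elevation. Because degree elevation reproduces the same polynomial and the Bernstein polynomials form a basis, $\Phi$ is injective on $\Pi_{n-1}$, so $\dim V=\dim\Pi_{n-1}=\binom{n+1}{2}$. The hypothesis that $\sum_{|\zeta|=n}c_{\zeta}d_{\zeta}=0$ for all $q\in\Pi_{n-1}$ says exactly that $\Phi(p)$ is Euclidean-orthogonal to $V$, i.e. $\Phi(p)\in V^{\perp}$, and $\dim V^{\perp}=N-\binom{n+1}{2}=n+1$. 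On the other hand, since $\left\langle\cdot,\cdot\right\rangle$ is positive definite on $\Pi_{n}$ (as $T$ has positive area), $\mathfrak{L_{n}}$ is the genuine orthogonal complement of $\Pi_{n-1}$ in $\Pi_{n}$, so $\dim\mathfrak{L_{n}}=\dim\Pi_{n}-\dim\Pi_{n-1}=n+1$ as well.

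To finish, the forward direction already shows $\Phi(\mathfrak{L_{n}})\subseteq V^{\perp}$, hence $\mathfrak{L_{n}}\subseteq\Phi^{-1}(V^{\perp})$; as these are subspaces of $\Pi_{n}$ of the same dimension $n+1$, they coincide, and therefore the coefficient condition $\Phi(p)\in V^{\perp}$ forces $p\in\mathfrak{L_{n}}$. I expect the only steps needing genuine care---the main obstacle---to be the two structural facts that drive the bookkeeping: the injectivity of degree elevation, so that $\dim V=\dim\Pi_{n-1}$, and the positive definiteness of the inner product, so that $\dim\mathfrak{L_{n}}=n+1$; with both secured, the conclusion is just the matching of two equal-dimensional subspaces.
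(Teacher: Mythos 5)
The paper never proves this lemma --- it is quoted verbatim from Farouki, Goodman and Sauer --- so there is no internal proof to compare against; judged on its own terms, your argument is correct. The forward implication is exactly right: applying the preceding lemma with $m=n$ gives $\left\langle p,q\right\rangle=\frac{(n!)^{2}}{(2n+2)!}\sum_{\left|\zeta\right|=n}c_{\zeta}d_{\zeta}$ for every $q\in\Pi_{n}$, the prefactor is nonzero, and $\left\langle p,q\right\rangle=0$ for $q\in\Pi_{n-1}$ then forces the coefficient sum to vanish. Your converse by dimension count is also sound, and you correctly isolate the two facts it rests on: positive definiteness of $\left\langle\cdot,\cdot\right\rangle$ on $\Pi_{n}$ (so that $\dim\mathfrak{L_{n}}=\dim\Pi_{n}-\dim\Pi_{n-1}=n+1$) and injectivity of the degree-$n$ Bernstein coefficient map (so that $\dim V^{\perp}=\binom{n+2}{2}-\binom{n+1}{2}=n+1$); together with the inclusion $\Phi(\mathfrak{L_{n}})\subseteq V^{\perp}$ from the forward direction, equality of the two subspaces follows. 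One streamlining worth noting: the converse can be done without counting dimensions. Decompose $p=p_{0}+p'$ with $p_{0}\in\mathfrak{L_{n}}$ and $p'\in\Pi_{n-1}$ (this uses the same positive definiteness); by the forward direction $p_{0}$ satisfies the coefficient condition, hence so does $p'=p-p_{0}$, and taking $q=p'$ itself yields $\sum_{\left|\zeta\right|=n}(c'_{\zeta})^{2}=0$, i.e.\ $p'=0$ and $p\in\mathfrak{L_{n}}$. Either route closes the argument; yours has no gap.
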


\subsection{Factorials}
We present some results concerning factorials, double factorials, and combinatorial identities.
The double factorial of an integer $n$ is given by
\begin{equation}\label{douvle-factrrial}
\begin{aligned}
(2n-1)!!&=(2n-1)(2n-3)(2n-5)\dots(3)(1) \hspace{.2in}  \text{if $n$ is odd} \\
n!!&=(n)(n-2)(n-4)\dots(4)(2) \hspace{.63in} \text{if $n$ is even},
\end{aligned}
\end{equation}
where $0!!=(-1)!!=1.$ From \eqref{douvle-factrrial}, we have the following definition. 
\begin{definition}For an integer $n,$ the double factorial is defined as
\begin{equation}\label{doublefac} n!!=\left\{\begin{array}{ll}2^{\frac{n}{2}}(\frac{n}{2})! & \text{if $n$ is even} \\\frac{n!}{2^{\frac{n-1}{2}}(\frac{n-1}{2})!} & \text{if $n$ is odd} \end{array}\right..\end{equation}
\end{definition}
From the definition, we can derive the factorial of an integer minus half as 
\begin{equation}\label{n-half}
\left(n-\frac{1}{2}\right)!=\frac{n!(2n-1)!!\sqrt{\pi}}{(2n)!!}.
\end{equation}

Moreover, the following identity can be used for the main results simplifications.
\begin{lemma} \label{combi-id} For an integer $n$, we have the following combinatorial identity 
\begin{equation}\label{main-id}\binom{n-\frac{1}{2}}{n-k}\binom{n-\frac{1}{2}}{k}=\frac{1}{2^{2n}}\binom{2n}{n}\binom{2n}{2k}.\end{equation}
\end{lemma}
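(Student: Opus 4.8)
The plan is to reduce both generalized binomial coefficients to ordinary factorials by means of the half-integer factorial formula \eqref{n-half}, after which \eqref{main-id} collapses to a routine cancellation of powers of two and double factorials. First I interpret the generalized binomial coefficients via $\binom{a}{b}=\frac{a!}{b!\,(a-b)!}$ and record the two elementary simplifications $n-\tfrac12-(n-k)=k-\tfrac12$ and $n-\tfrac12-k=(n-k)-\tfrac12$. With these, the product on the left-hand side of \eqref{main-id} becomes
$$\binom{n-\frac12}{n-k}\binom{n-\frac12}{k}=\frac{\bigl((n-\tfrac12)!\bigr)^{2}}{(n-k)!\,(k-\tfrac12)!\,k!\,\bigl((n-k)-\tfrac12\bigr)!},$$
so that the entire expression is now written in terms of the three distinct half-integer factorials $(n-\tfrac12)!$, $(k-\tfrac12)!$, and $((n-k)-\tfrac12)!$.

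Next I would substitute \eqref{n-half} for each of these. The factor $(n-\tfrac12)!$ appears squared in the numerator and each of $(k-\tfrac12)!$, $((n-k)-\tfrac12)!$ appears once in the denominator, so the two factors of $\sqrt{\pi}$ produced in the numerator cancel exactly against the two produced in the denominator, and no irrational constant survives. What remains is a ratio involving the double factorials $(2n-1)!!$, $(2k-1)!!$, $(2(n-k)-1)!!$ together with $(2n)!!$, $(2k)!!$, $(2(n-k))!!$. I then clear these using the standard relations $(2m)!!=2^{m}m!$ and $(2m-1)!!=\frac{(2m)!}{2^{m}m!}$, both of which follow directly from \eqref{doublefac}. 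Careful collection of the surviving powers of two and factorials should yield
$$\binom{n-\frac12}{n-k}\binom{n-\frac12}{k}=\frac{\bigl((2n)!\bigr)^{2}}{2^{2n}(n!)^{2}\,(2k)!\,(2n-2k)!},$$
and recognizing $\frac{(2n)!}{(n!)^{2}}=\binom{2n}{n}$ and $\frac{(2n)!}{(2k)!\,(2n-2k)!}=\binom{2n}{2k}$ then gives precisely the right-hand side of \eqref{main-id}.

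The only obstacle here is bookkeeping rather than any genuine difficulty: one must track the exponents of $2$ contributed by the six double factorials and verify that they sum to the advertised $2^{2n}$, and likewise that the spurious factors $k!$ and $(n-k)!$ introduced by the conversions cancel cleanly. A convenient cross-check, which I would use to guard against arithmetic slips, is to bypass \eqref{n-half} altogether and instead expand $\binom{n-\frac12}{k}$ directly as the falling product $\frac{(n-\frac12)(n-\frac32)\cdots(n-k+\frac12)}{k!}=\frac{(2n-1)!!}{2^{k}k!\,(2n-2k-1)!!}$ (and similarly for $\binom{n-\frac12}{n-k}$); multiplying these two forms reproduces the same intermediate ratio of double factorials without ever invoking $\sqrt{\pi}$, confirming the computation.
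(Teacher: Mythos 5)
Your proposal is correct and follows essentially the same route as the paper: expand both generalized binomial coefficients into half-integer factorials, convert them via \eqref{n-half} (the $\sqrt{\pi}$ factors cancelling in pairs), and then eliminate the double factorials using $(2m)!=(2m-1)!!\,2^{m}m!$ to arrive at $\frac{1}{2^{2n}}\binom{2n}{n}\binom{2n}{2k}$. The intermediate expression $\frac{((2n)!)^{2}}{2^{2n}(n!)^{2}(2k)!(2n-2k)!}$ you record is exactly the right-hand side, so the bookkeeping checks out.
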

\begin{proof}
By expanding the left-hand side and using \eqref{n-half} with some simplifications, we have
\begin{align*}
\binom{n-\frac{1}{2}}{n-k}\binom{n-\frac{1}{2}}{k}&=\frac{\left(n-\frac{1}{2}\right)!}{(n-k)!(k-\frac{1}{2})!}\frac{\left(n-\frac{1}{2}\right)!}{k!(n-k-\frac{1}{2})!}\\
&=\frac{(2n-1)!!}{2^{n}(n-k)!}\frac{2^{k}}{(2k-1)!!}\frac{(2n-1)!!}{2^{n}k!}\frac{2^{n-k}}{(2n-2k-1)!!}\\
&=\frac{1}{2^{n}(n-k)!k!}\frac{(2n-1)!!}{(2k-1)!!}\frac{(2n-1)!!}{(2(n-k)-1)!!}.
\end{align*}
Using the fact $(2n)!=(2n-1)!!2^{n}n!$ we get the desired identity.
\end{proof}

\subsection{Univariate Chebyshev-I Polynomials}\label{Ultrasp-orthog}
The Chebyshev-I polynomials $T_{n}(x)$ of degree $n$ are the orthogonal, except for a constant factor, on $[-1,1]$ with respect to the weight function 
$\mathrm{W}(x)=\frac{1}{\sqrt{1-x^{2}}}.$

The following two lemmas will be used in the construction of the bivariate Chebyshev-I weighted orthogonal polynomials and the proof of the main results.
The Pochhammer symbol is more appropriate, but the combinatorial notation gives
more compact and readable formulas, these have also been used by Szeg\"{o} \cite{Szego}. 
\begin{lemma}\label{jacinBer form}\cite{rababah4}
The Chebyshev-I polynomials $T_{r}(x)$ have the
Bernstein representation:
\begin{equation}
T_{r}(x)=
\frac{r!(2r-1)!!}{2^{2r}(2r)!!}\binom{2r}{r}^{2}
\sum_{i=0}^{r}(-1)^{r-i}\frac{i!(r-i)!}{(2i)!(2r-2i)!}B_{i}^{r}(x), \hspace{.05in} r=0,1,\dots.
\end{equation}
\end{lemma}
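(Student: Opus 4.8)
The plan is to prove the identity by converting a known explicit form of $T_r$ into the Bernstein basis, rather than verifying it from scratch. I would start from the standard explicit expansion of the Chebyshev-I polynomial as a Jacobi polynomial with parameters $(-\tfrac12,-\tfrac12)$ (the parameters dictated by the weight $\tfrac{1}{\sqrt{1-x^2}}$ recalled in Section \ref{Ultrasp-orthog}); after the affine substitution $u=\tfrac{1+x}{2}$, which sends $[-1,1]$ onto the interval $[0,1]$ carrying the Bernstein basis, this expansion displays $T_r$ as a linear combination of the products $u^{i}(1-u)^{r-i}$, $i=0,\dots,r$, whose coefficients are, up to an $r$-dependent normalization, the signed products $(-1)^{r-i}\binom{r-\frac12}{i}\binom{r-\frac12}{r-i}$.

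The first concrete step is to replace each $u^{i}(1-u)^{r-i}$ by $\tfrac{1}{\binom{r}{i}}B_i^r(u)$ using the definition \eqref{bern}; this already expresses $T_r$ in the basis $\{B_i^r\}_{i=0}^{r}$, but with coefficients still containing the half-integer binomials. The decisive simplification is then Lemma \ref{combi-id}: applying \eqref{main-id} with $n=r$ and $k=i$ collapses the product $\binom{r-\frac12}{i}\binom{r-\frac12}{r-i}$ into $\tfrac{1}{2^{2r}}\binom{2r}{r}\binom{2r}{2i}$, so that the coefficient of $B_i^r$ becomes an ordinary integer binomial expression times the global factor $\tfrac{1}{2^{2r}}\binom{2r}{r}$. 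Finally I would rewrite the leftover ratio $\binom{2r}{2i}/\binom{r}{i}$ and the accumulated normalization purely in terms of factorials and double factorials, using the relation $(2n)!=(2n-1)!!\,2^{n}n!$ together with \eqref{n-half} and the definition \eqref{doublefac}, in order to produce simultaneously the stated inner coefficient $(-1)^{r-i}\tfrac{i!(r-i)!}{(2i)!(2r-2i)!}$ and the stated prefactor $\tfrac{r!(2r-1)!!}{2^{2r}(2r)!!}\binom{2r}{r}^{2}$.

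I expect the main obstacle to be exactly this last bookkeeping step: the normalization receives contributions both from the Chebyshev-to-Jacobi scaling and from the factors $1/\binom{r}{i}$, and these mix $(2r)!!$, $(2r-1)!!$, and ordinary factorials, so some care is needed to have everything coalesce into the claimed closed form; verifying the base cases $r=0$ and $r=1$ by hand first is the safest way to pin the constant down before carrying out the general simplification. A self-contained alternative that avoids the Jacobi form is to argue by induction on $r$ through the three-term recurrence $T_{r+1}=2xT_r-T_{r-1}$: multiplication by $x$ sends $B_i^r$ to a multiple of $B_{i+1}^{r+1}$ and the lower term $T_{r-1}$ must be raised two degrees via the degree-elevation rule for Bernstein coefficients recorded after \eqref{deg elevation}, so the induction step reduces to a single identity among the proposed coefficients, which is again closed off by Lemma \ref{combi-id}.
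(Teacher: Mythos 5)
The paper does not actually prove this lemma --- it is imported verbatim from \cite{rababah4} --- so there is no internal proof to compare against; what can be judged is whether your plan is sound and whether it fits the machinery the paper sets up. It does: the route Jacobi $(-\frac12,-\frac12)$ expansion $\to$ affine change of variable $\to$ division by $\binom{r}{i}$ to pass from $u^i(1-u)^{r-i}$ to $B_i^r(u)$ $\to$ Lemma \ref{combi-id} is exactly the derivation that the paper's auxiliary results \eqref{n-half} and \eqref{main-id} are designed for, and indeed the coefficients $c(i)$ in \eqref{jacBiv} are precisely the output of that computation. Your alternative via the three-term recurrence and degree elevation would also work but is strictly messier, since $T_{r-1}$ must be elevated two degrees and the factor is $2x-1$ rather than $x$; the Jacobi route is the right first choice.

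The one point you must confront --- and you correctly flag it as the danger spot --- is the normalization. Carrying out your plan cleanly, using $P_r^{(-\frac12,-\frac12)}(1)=\binom{r-\frac12}{r}=\frac{(2r-1)!!}{(2r)!!}$ and Lemma \ref{combi-id}, one lands on the tidy identity
\begin{equation*}
T_r(x)=\sum_{i=0}^{r}(-1)^{r-i}\,\frac{\binom{2r}{2i}}{\binom{r}{i}}\,B_i^r(x)
\end{equation*}
for the standard normalization $T_r(1)=1$. Rewriting $\binom{2r}{2i}/\binom{r}{i}$ in factorials shows that the displayed right-hand side of the lemma equals this expression multiplied by $\binom{2r}{r}^2/2^{4r}=\bigl((2r-1)!!/(2r)!!\bigr)^2$; your proposed base-case check makes this concrete, since at $r=1$ the stated formula evaluates to $\frac{1}{4}(2x-1)$ rather than $2x-1$. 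So if you insist on the usual normalization you will not reach the formula as printed, only a constant multiple of it. This is consistent with the paper's own convention that its Chebyshev-I polynomials are fixed only ``except for a constant factor,'' and it is harmless for every use the paper makes of the lemma (orthogonality is scale-invariant), but your write-up should state explicitly which normalization of $T_r$ it adopts, or record the constant $\binom{2r}{r}^2/2^{4r}$ by which the two conventions differ; otherwise the final ``bookkeeping step'' you describe cannot close.
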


\begin{lemma}\label{rabab2}\cite{rababah4}
The Chebyshev-I polynomials $T_{0}(x),\dots,T_{n}(x)$\ of degree less than or equal to $n$ can be expressed in the Bernstein basis of fixed degree $n$ by the
following formula
\[T_{r}(x)=\sum\limits_{i=0}^{n}M_{i,r}^{n}B_{i}^{n}(x) ,\quad
r=0,1,\ldots,n\]
where
\begin{equation}\label{coeff}
M_{i,r}^{n}=\binom{n}{i}^{-1}\sum\limits_{k=\max(0,i+r-n)}^{\min(i,r)}(-1)^{r-k}\binom{n-r}{i-k}\binom{2r}{2k}.
\end{equation}
\end{lemma}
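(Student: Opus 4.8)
The plan is to obtain the fixed-degree-$n$ representation of $T_r$ by \emph{degree elevation} applied to the minimal-degree representation of Lemma~\ref{jacinBer form}. Writing that lemma compactly as $T_r(x)=\sum_{k=0}^{r}c_{k,r}B_k^r(x)$ with
\[
c_{k,r}=\frac{r!(2r-1)!!}{2^{2r}(2r)!!}\binom{2r}{r}^{2}(-1)^{r-k}\frac{k!(r-k)!}{(2k)!(2r-2k)!},
\]
I first record the univariate Bernstein degree-elevation identity. Exactly as the paper elevates simplex polynomials by multiplying through by $1=u+v+w$, one multiplies $B_k^r(x)=\binom{r}{k}x^k(1-x)^{r-k}$ by $1=(x+(1-x))^{n-r}$ and re-expands in the basis \eqref{bern} of degree $n$, obtaining
\[
B_k^r(x)=\sum_{i=k}^{k+n-r}\frac{\binom{r}{k}\binom{n-r}{i-k}}{\binom{n}{i}}\,B_i^n(x),\qquad 0\le k\le r\le n.
\]

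Second, I would substitute this identity into the representation of $T_r$ and interchange the two finite sums, so that $T_r(x)=\sum_{i=0}^{n}M_{i,r}^{n}B_i^n(x)$ with
\[
M_{i,r}^{n}=\binom{n}{i}^{-1}\sum_{k}c_{k,r}\binom{r}{k}\binom{n-r}{i-k}.
\]
The range of the inner index is forced by the two nonvanishing conditions $0\le k\le r$ (from the original sum) and $0\le i-k\le n-r$ (from $\binom{n-r}{i-k}$); intersecting these gives precisely $\max(0,i+r-n)\le k\le\min(i,r)$, which is exactly the range appearing in \eqref{coeff}. This index-chasing and the sum interchange are the routine, bookkeeping part of the argument.

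Third, and this is where the real work lies, I would simplify the constant $c_{k,r}\binom{r}{k}$ down to the clean binomial form $(-1)^{r-k}\binom{2r}{2k}$ demanded by \eqref{coeff}. The factor $\binom{r}{k}=\tfrac{r!}{k!(r-k)!}$ immediately cancels the $k!(r-k)!$ inside $c_{k,r}$, and the remaining product of double factorials and central binomials must then be reorganized using \eqref{n-half}, the double-factorial identity \eqref{doublefac}, and above all the combinatorial identity \eqref{main-id} of Lemma~\ref{combi-id}, which is tailored to convert products of the shape $\binom{2r}{r}\binom{2r}{2k}$ into double-factorial quotients. Verifying that these cancellations collapse $c_{k,r}\binom{r}{k}$ to $(-1)^{r-k}\binom{2r}{2k}$ is the main obstacle: the degree-elevation and limit-intersection steps are mechanical, whereas reconciling the normalizing constant of Lemma~\ref{jacinBer form} with the compact coefficients in \eqref{coeff} requires the full double-factorial apparatus of \eqref{n-half}--\eqref{main-id} to be deployed carefully, and the powers of $2$ and the central binomial factors must be tracked precisely for the match to hold.
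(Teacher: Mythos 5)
The paper offers no proof of this lemma; it is imported verbatim from \cite{rababah4}, so there is nothing internal to compare against. Your strategy --- degree-elevate the minimal-degree representation of Lemma \ref{jacinBer form} by multiplying by $(x+(1-x))^{n-r}$, interchange the two finite sums, and read off the summation limits from the vanishing of $\binom{n-r}{i-k}$ --- is the natural route, and its mechanical parts all check out: the elevation identity, the interchange, and the range $\max(0,i+r-n)\le k\le\min(i,r)$ are correct.

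The gap is exactly the step you defer to the end. With the prefactor of Lemma \ref{jacinBer form} as printed, the product $c_{k,r}\binom{r}{k}$ does \emph{not} collapse to $(-1)^{r-k}\binom{2r}{2k}$. Using $(2r)!!=2^{r}r!$ and $(2r-1)!!=(2r)!/(2^{r}r!)$ one finds
\[
c_{k,r}\binom{r}{k}=(-1)^{r-k}\,\frac{\binom{2r}{r}^{2}}{2^{4r}}\,\binom{2r}{2k},
\]
so your $M_{i,r}^{n}$ comes out as $\frac{\binom{2r}{r}^{2}}{2^{4r}}$ times the expression in \eqref{coeff}, and this factor is not $1$ for any $r\geq 1$ (it equals $1/4$ at $r=1$). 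The discrepancy is not of your making: Lemma \ref{jacinBer form} as stated in the paper is normalized inconsistently with \eqref{coeff} --- at $r=1$ it yields $\tfrac14(2x-1)$ rather than $T_{1}(x)=2x-1$, whereas \eqref{coeff} with $n=r=1$ correctly gives $-B_{0}^{1}+B_{1}^{1}=2x-1$. Your argument would go through verbatim once the prefactor of Lemma \ref{jacinBer form} is corrected to $(2r)!/r!$, equivalently once that lemma is restated as the $n=r$ case of \eqref{coeff}, namely $T_{r}=\sum_{i}(-1)^{r-i}\binom{2r}{2i}\binom{r}{i}^{-1}B_{i}^{r}$. But as written, the one step you flag as ``the main obstacle'' and leave unverified is precisely the step that fails, so the proof is incomplete.
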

For simplicity and without lost of generality,  we take $x\in[0,1]$ for both Bernstein and Chebyshev-I polynomials. 

\section{Chebyshev-I Weighted Orthogonal Polynomials}
In this section, a simple closed-form representation of degree-ordered system
of orthogonal polynomials $\mathscr{T}_{n,r}(u,v,w)$ is constructed on a triangular domain $T,$ by generalization of the construction in \cite{Farouki}.  These polynomials will be given in Bernstein form, since Bernstein polynomials are stable \cite{Farouki2}.
The basic idea in this construction is to make $\mathscr{T}_{n,r}(u,v,w)$ coincide with the univeriate
Chebyshev-I polynomial along one edge of $T,$ and to make its variation along each chord parallel to that edge a scaled version of this Chebyshev polynomial.
The variation of $\mathscr{T}_{n,r}(u,v,w)$ with $w$ can then be arranged so as to ensure its orthogonality on $T$ with
every polynomial of degree $<n,$ and with other basis polynomials $\mathscr{T}_{n,s}(u,v,w)$ of degree $n$ for $r\neq s.$

\begin{lemma}\label{faro}\cite{Farouki}.
For $r = 0,\dots,n$ define the polynomials
\begin{equation}\label{poly} Q_{n,r}(w)=\sum_{j=0}^{n-r} (-1)^{j}\binom{n+r+1}{j}B_{j}^{n-r}(w),\end{equation}
then for $i = 0,\dots,n-r-1,$ $Q_{n,r}(w)$ is orthogonal to $(1-w)^{2r+i+1}$ on $[0,1],$ and hence
$$\int_{0}^{1} Q_{n,r}(w)p(w)(1-w)^{2r+1}dw = 0$$
for every polynomial $p(w)$ of degree less than or equal $n-r-1.$
\end{lemma}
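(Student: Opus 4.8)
The plan is to reduce the asserted orthogonality to a single finite alternating sum and then annihilate that sum with the standard finite-difference identity $\sum_{j=0}^{N}(-1)^{j}\binom{N}{j}P(j)=0$, valid for every polynomial $P$ with $\deg P<N$ (indeed this sum equals $(-1)^{N}(\Delta^{N}P)(0)$, where $\Delta$ is the forward-difference operator, and the $N$-th difference of a lower-degree polynomial is zero). First I would expand each Bernstein polynomial as $B_{j}^{n-r}(w)=\binom{n-r}{j}w^{j}(1-w)^{n-r-j}$ and integrate termwise against the weight $(1-w)^{2r+i+1}$. Each term is a Beta integral,
$$\int_{0}^{1}w^{j}(1-w)^{n+r+i+1-j}\,dw=\frac{j!\,(n+r+i+1-j)!}{(n+r+i+2)!},$$
and after using $\binom{n-r}{j}j!=(n-r)!/(n-r-j)!$ the whole quantity collapses to
$$\int_{0}^{1}Q_{n,r}(w)(1-w)^{2r+i+1}\,dw=\frac{(n-r)!}{(n+r+i+2)!}\sum_{j=0}^{n-r}(-1)^{j}\binom{n+r+1}{j}\frac{(n+r+i+1-j)!}{(n-r-j)!}.$$

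The heart of the argument is to show that the remaining sum vanishes for $0\le i\le n-r-1$. Here I would treat $P(j):=\dfrac{(n+r+i+1-j)!}{(n-r-j)!}=\prod_{m=1}^{2r+i+1}(n-r-j+m)$ as a polynomial in $j$ of degree $2r+i+1$. Since the binomial weight is $\binom{n+r+1}{j}$, the natural move is to compare against the full alternating sum over $j=0,\dots,n+r+1$. The key observation is that every added term is already zero: for $j=n-r+1,\dots,n+r+1$ the shift $s=j-(n-r)$ runs only from $1$ up to $2r+1\le 2r+i+1$, so the factor with $m=s$ equals $n-r-j+s=0$ and hence $P(j)=0$ there. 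Therefore
$$\sum_{j=0}^{n-r}(-1)^{j}\binom{n+r+1}{j}P(j)=\sum_{j=0}^{n+r+1}(-1)^{j}\binom{n+r+1}{j}P(j),$$
and because $\deg P=2r+i+1\le n+r<n+r+1$ for $i\le n-r-1$, the finite-difference identity forces the right-hand side to be $0$. This establishes orthogonality to each $(1-w)^{2r+i+1}$.

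For the concluding ``hence'' clause I would write $(1-w)^{2r+i+1}=(1-w)^{i}(1-w)^{2r+1}$, so the identities just proved say precisely that $Q_{n,r}$ is orthogonal, against the weight $(1-w)^{2r+1}$, to each of $1,(1-w),\dots,(1-w)^{n-r-1}$. These are polynomials of the distinct degrees $0,1,\dots,n-r-1$, hence a basis of $\Pi_{n-r-1}$; writing an arbitrary $p$ of degree at most $n-r-1$ in this basis and using linearity of the integral yields $\int_{0}^{1}Q_{n,r}(w)p(w)(1-w)^{2r+1}\,dw=0$.

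I expect the only delicate point to be the bookkeeping in the range-extension step, where one must simultaneously verify the double constraint that $\deg P=2r+i+1$ stays strictly below $N=n+r+1$ and that every added index $j>n-r$ produces a vanishing factor of $P$; both hinge on $i\le n-r-1$ and meet exactly at the boundary $i=n-r-1$. Once this inequality is checked, the finite-difference identity does all of the work and no further computation is required.
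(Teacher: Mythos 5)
Your argument is correct. Note, however, that the paper does not prove this lemma at all: it is imported verbatim from Farouki--Goodman--Sauer \cite{Farouki}, so there is no in-paper proof to compare against. What you have supplied is a self-contained elementary derivation, and every step checks out: the Beta integral $\int_{0}^{1}w^{j}(1-w)^{n+r+i+1-j}\,dw=\frac{j!\,(n+r+i+1-j)!}{(n+r+i+2)!}$ is right, the replacement of the factorial ratio by the polynomial $P(j)=\prod_{m=1}^{2r+i+1}(n-r-j+m)$ (which is essential, since $(n-r-j)!$ is meaningless for $j>n-r$) is handled cleanly, the extension of the sum to $j=0,\dots,n+r+1$ is justified because each added index $j=n-r+s$ with $1\le s\le 2r+1\le 2r+i+1$ kills the factor $m=s$, and the degree count $2r+i+1\le n+r<n+r+1$ is exactly what the hypothesis $i\le n-r-1$ provides, so the $(n+r+1)$-st finite difference annihilates $P$. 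The concluding step, that $1,(1-w),\dots,(1-w)^{n-r-1}$ span $\Pi_{n-r-1}$, is also correct. The one thing worth making explicit if you write this up is the remark you only imply: the two constraints (degree of $P$ small enough, and all appended terms vanishing) are logically independent and both must hold, and they are tight simultaneously at $i=n-r-1$; as written your "delicate point" paragraph acknowledges this, so the proof stands as a complete and arguably more elementary substitute for chasing down the cited reference.
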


Now, for $r=0,1,\ldots ,n$ and $n=0,1,2,\ldots$ we define the bivariate polynomials  
\begin{equation}\label{jacBiv}
\mathscr{T}_{n,r}(u,v,w)=\sum_{i=0}^{r}c(i)B_{i}^{r}(u,v)\sum_{j=0}^{n-r}(-1)^{j}\binom{n+r+1}{j}B_{j}^{n-r}(w,u+v),
\end{equation}
where $B_{i}^{r}(u,v)$ is the Bernstein polynomials defined in \eqref{bern} and
\begin{equation}
c(i)=(-1)^{r-i}\frac{\binom{2r}{r}\binom{2r}{2i}}{2^{2r}\binom{r}{i}},\hspace{.1in}i=0,\dots,r.
\end{equation}

To show that the bivariate polynomials $\mathscr{T}_{n,r}(u,v,w),$ $r=0,1,\ldots,n;$
$n\geq 0$ with respect to the weight function $\mathrm{W}(u,v,w),$  form an orthogonal system over the triangular domain $T,$ 
we prove $\mathscr{T}_{n,r}(u,v,w)\in \mathfrak{L}_{n},$  $r=0,1,\ldots,n;$ $n\geq 1,$ and for $r\neq s,$
$\mathscr{T}_{n,r}\perp \mathscr{T}_{n,s}.$ 

Let $\mathscr{T}_{0,0}=1,$ the polynomials
$\mathscr{T}_{n,r}(u,v,w)$ for $r=0,\dots,n;$ $n\geq 0$ form a degree-ordered orthogonal sequence over T. 

The polynomials in \eqref{jacBiv} can be written using the univariate Chebyshev-I polynomials form as:
\begin{equation}\mathscr{T}_{n,r}(u,v,w)=\sum\limits_{i=0}^{r}c(i)\frac{B_{i}^{r}(u,v)}{(u+v)^{r}}(1-w)^{r}\sum\limits_{j=0}^{n-r}(-1)^{j}\binom{n+r+1}{j} B_{j}^{n-r}(w,1-w).
\end{equation}
Using Lemma \ref{jacinBer form} and $B_{i}^{r}(u,v)=(u+v)^{r}B_{i}^{r}(\frac{u}{1-w}),$ we get
\begin{equation}\label{univ-poly}
\mathscr{T}_{n,r}(u,v,w)=\frac{2^{2r}(r!)^{2}}{(2r)!}T_{r}(\frac{{u}}{{1-w}})(1-w)^{r}Q_{n,r}(w),\hspace{.1in}r=0,\dots,n,
\end{equation}
where $T_{r}(t)$\ is the univariate Chebyshev-I polynomial of degree $r$ and $Q_{n,r}(w)$ is defined in equation \eqref{poly}.
For simplicity and without loss of generality we rewrite \eqref{univ-poly} as
\begin{equation}
\mathscr{T}_{n,r}(u,v,w)=T_{r}(\frac{{u}}{{1-w}})(1-w)^{r}Q_{n,r}(w),\hspace{.1in}r=0,\dots,n.
\end{equation}

Now, we show that the polynomials
$\mathscr{T}_{n,r}(u,v,w),$ $r=0,\dots,n,$
are orthogonal to all polynomials of degree $<n$ over the
triangular domain $T$.

For each $s=0,\dots,m$ and $m=0,\dots,n-1$ we define the bivariate polynomials
\begin{equation}
g_{s,m}(u,v,w)=T_{s}(\frac{u}{1-w})(1-w)^{m} w^{n-m-1}, \hspace{.05in}m=0,\dots,n-1,s=0,\dots,m.
\end{equation}
The span of $g_{s,m}(u,v,w)$ includes the set of Bernstein polynomials
$B_{j}^{m}(u,v)w^{n-m-1}, j=0,\dots,m; m=0,\dots,n-1,$
which span $\Pi_{n-1}.$ 

So, it is sufficient to show that for each $s=0,\dots,m;$ $m=0,\dots,n-1,$
\begin{equation}I:=
\iint\limits_{T}\mathscr{T}_{n,r}(u,v,w)g_{s,m}(u,v,w)\mathrm{W}(u,v,w)dA=0.
\end{equation}
The integral $I$ can be simplified as
\begin{equation}
I=\Delta \int\limits_{0}^{1}\int\limits_{0}^{1-w}T_{r}(%
\frac{u}{1-w})Q_{n,r}(w)T_{s}(\frac{u}{1-w}%
)w^{n-m-1}u^{-\frac{1}{2}}v^{-\frac{1}{2}}(1-w)^{\gamma+r+m }dudw.
\end{equation}
By making the substitution $u=t(1-w),$ we have
\begin{equation*}
I=\Delta \int\limits_{0}^{1}T_{r}(t)T_{s}(t)t^{-\frac{1}{2}}\left(1-t\right)
^{-\frac{1}{2}}dt\int\limits_{0}^{1}Q_{n,r}(w)(1-w)^{\gamma+r+m}w^{n-m-1}dw.
\end{equation*}
If $m<r$\ then we have $s<r,$\ and the first integral is zero by
the orthogonality property of the Chebyshev-I polynomials. If
$r\leq m\leq n-1$, the second integral equals zero by Lemma \ref{faro}. Thus we have the following theorem.

\begin{theorem}\label{main}
For each $r=0,1,\ldots,n;$ $n\geq 1,$  $\mathscr{T}_{n,r}(u,v,w)\in \mathfrak{L}_{n}$ with respect to the weight function
$\mathrm{W}(u,v,w)=u^{-\frac{1}{2}}v^{-\frac{1}{2}}(1-w)^{\gamma}$ such that $\gamma\geq 1.$
\end{theorem}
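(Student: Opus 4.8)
The plan is to verify the two defining properties of membership in $\mathfrak{L}_{n}$ separately: that $\mathscr{T}_{n,r}$ is a polynomial of degree at most $n$, and that it is orthogonal, against the weight $\mathrm{W}$, to every polynomial of degree $\le n-1$. The first property is immediate from \eqref{jacBiv}: each summand is a product $B_{i}^{r}(u,v)\,B_{j}^{n-r}(w,u+v)$ of total degree $r+(n-r)=n$, so $\mathscr{T}_{n,r}\in\Pi_{n}$. The substance of the theorem is therefore the orthogonality $\mathscr{T}_{n,r}\perp\Pi_{n-1}$, and this is where I would concentrate.

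For the orthogonality I would reduce to a finite spanning set. Since the monomials $u^{a}v^{b}w^{c}$ with $a+b+c=n-1$ span $\Pi_{n-1}$ on $T$, and each such monomial appears (up to a constant) as $B_{j}^{m}(u,v)\,w^{n-m-1}$, which in turn lies in the span of the $g_{s,m}$, it suffices to show $I:=\iint_{T}\mathscr{T}_{n,r}\,g_{s,m}\,\mathrm{W}\,dA=0$ for all $0\le s\le m\le n-1$. This is exactly the quantity already reduced in the text: writing $\mathscr{T}_{n,r}=T_{r}(\tfrac{u}{1-w})(1-w)^{r}Q_{n,r}(w)$, inserting the weight $\mathrm{W}$, and substituting $u=t(1-w)$ (so that $u^{-1/2}v^{-1/2}\,du$ collapses to $t^{-1/2}(1-t)^{-1/2}\,dt$, with the $(1-w)$ powers cancelling) factorizes $I$ into a univariate Chebyshev integral in $t$ times a univariate integral in $w$.

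It then remains to kill this product in two regimes. When $m<r$ one has $s\le m<r$, so the factor $\int_{0}^{1}T_{r}(t)T_{s}(t)\,t^{-1/2}(1-t)^{-1/2}\,dt$ vanishes by the orthogonality of the Chebyshev-I polynomials on $[0,1]$. When $r\le m\le n-1$ the Chebyshev factor need not vanish (for instance at $s=r$), so I would instead force the $w$-factor $\int_{0}^{1}Q_{n,r}(w)(1-w)^{\gamma+r+m}w^{n-m-1}\,dw$ to vanish by Lemma \ref{faro}: splitting $(1-w)^{\gamma+r+m}=(1-w)^{2r+1}(1-w)^{\gamma+m-r-1}$ exposes the residual factor $p(w)=(1-w)^{\gamma+m-r-1}w^{n-m-1}$, and Lemma \ref{faro} delivers the vanishing as soon as $\deg p\le n-r-1$.

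The crux, and the step I expect to be the main obstacle, is precisely this degree count, because it is the only place where the exponent $\gamma$ enters. Here $\deg p=(\gamma+m-r-1)+(n-m-1)=\gamma+n-r-2$, so the hypothesis $\deg p\le n-r-1$ of Lemma \ref{faro} is the binding constraint on the admissible $\gamma$, apparently forcing $\gamma\le 1$; since Lemma \ref{faro} is tailored to the single weight $(1-w)^{2r+1}$, matching that power against $(1-w)^{\gamma+r+m}$ is exactly what the argument hinges on. I would therefore scrutinize this bookkeeping with particular care, check the boundary regime $m=n-1$ explicitly, and only then assemble the two regimes to conclude $I=0$ for every $g_{s,m}$, whence $\mathscr{T}_{n,r}\perp\Pi_{n-1}$ and $\mathscr{T}_{n,r}\in\mathfrak{L}_{n}$.
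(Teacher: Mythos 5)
Your argument is the paper's own proof, step for step: the same spanning family $g_{s,m}$, the same substitution $u=t(1-w)$, the same split into the two regimes $m<r$ and $r\le m\le n-1$. The one place you diverge is the place that matters. Where the paper simply asserts that for $r\le m\le n-1$ the second integral ``equals zero by Lemma \ref{faro}'', you actually check the hypothesis of that lemma, and your degree count is correct: with $p(w)=(1-w)^{\gamma+m-r-1}w^{n-m-1}$ one has $\deg p=\gamma+n-r-2$, so Lemma \ref{faro} applies only when $\gamma\le 1$ (and, strictly, only when $\gamma$ is an integer, since otherwise $p$ is not a polynomial at all). Combined with the theorem's hypothesis $\gamma\ge 1$, this leaves exactly $\gamma=1$, for which $\deg p=n-r-1$ sits at the boundary of the lemma's range and the proof closes.

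For $\gamma>1$ the failure is not an artifact of the method: the statement itself is false. Take $n=1$, $r=0$, so that $\mathscr{T}_{1,0}=Q_{1,0}(w)=1-3w$, and test against the constant polynomial ($s=m=0$). The factored integral is $\pi\int_0^1(1-3w)(1-w)^\gamma\,dw=\pi\bigl(\tfrac{3}{\gamma+2}-\tfrac{2}{\gamma+1}\bigr)$, which vanishes only at $\gamma=1$; already at $\gamma=2$ it equals $\pi/12\ne 0$, so $\mathscr{T}_{1,0}\notin\mathfrak{L}_1$. (This reflects the fact that $Q_{n,r}$ is, up to a constant, the degree-$(n-r)$ orthogonal polynomial for the weight $(1-w)^{2r+1}$ on $[0,1]$, so it cannot also be orthogonal to $(1-w)^{n+r+1}$.) Your instinct to treat the degree count as the crux was right: the theorem should read $\gamma=1$ rather than $\gamma\ge 1$, and with that correction your write-up is a complete proof; without it, no amount of care at the boundary case $m=n-1$ will rescue the statement.
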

Note that taking $\mathrm{W}(u,v,w)=u^{-\frac{1}{2}}v^{-\frac{1}{2}}(1-w)^{\gamma}$ enables us to separate the integrand, and taking the constrain
$\gamma\geq 1$ enables us to use Lemma \ref{faro}.

Now we need to show $\mathscr{T}_{n,r}(u,v,w)$ is orthogonal
to each polynomial $\mathscr{T}_{n,s}(u,v,w)$ where $r\neq s.$ 
\begin{align*}I&=
\iint\nolimits_{T}\mathscr{T}_{n,r}(u,v,w)\mathscr{T}_{n,s}(u,v,w)\mathrm{W}(u,v,w)dA\\
&=\Delta \int\limits_{0}^{1}\int\limits_{0}^{1-w}T_{r}\left(\frac{u}{1-w}\right) T_{s}\left(\frac{u}{1-w}%
\right) (1-w)^{r+s}Q_{n,r}(w)Q_{n,s}(w)\mathrm{W}(u,v,w)dudw.
\end{align*}
By making the substitution $u=t(1-w),$ we have
\[I=\Delta \int\limits_{0}^{1}T_{r}(t)T_{s}(t)t^{-\frac{1}{2}}\left(1-t\right)^{-\frac{1}{2}}dt\int\limits_{0}^{1}Q_{n,r}(w)Q_{n,s}(w)(1-w)^{\gamma+r+s}dw.
\]
The previous integral equals zero by orthogonality property of the Chebyshev-I polynomials, thus we have the following theorem.

\begin{theorem}
For $r\neq s,$ $\mathscr{T}_{n,r}(u,v,w)\perp
\mathscr{T}_{n,s}(u,v,w)$ with respect to
the weight function
$\mathrm{W}(u,v,w)=u^{-\frac{1}{2}}v^{-\frac{1}{2}}(1-w)^{\gamma}$
such that $\gamma >-1$ $.$
\end{theorem}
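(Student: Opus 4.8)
The plan is to compute the inner product
\[
I=\iint_{T}\mathscr{T}_{n,r}(u,v,w)\,\mathscr{T}_{n,s}(u,v,w)\,\mathrm{W}(u,v,w)\,dA
\]
directly and exhibit it as a product of a univariate $t$-integral and a univariate $w$-integral, the first of which vanishes by the orthogonality of the Chebyshev-I polynomials on $[0,1]$. First I would substitute the closed form $\mathscr{T}_{n,r}(u,v,w)=T_{r}\!\left(\tfrac{u}{1-w}\right)(1-w)^{r}Q_{n,r}(w)$ from \eqref{univ-poly} for both factors. This yields the integrand $T_{r}\!\left(\tfrac{u}{1-w}\right)T_{s}\!\left(\tfrac{u}{1-w}\right)(1-w)^{r+s}Q_{n,r}(w)Q_{n,s}(w)$ weighted by $u^{-1/2}v^{-1/2}(1-w)^{\gamma}$, integrated over $T$ with the constraint $v=1-u-w$.

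The crucial step is the change of variables $u=t(1-w)$ with $w$ held fixed, under which $v=(1-w)(1-t)$ and $du=(1-w)\,dt$. I would check that the weight and the Jacobian recombine cleanly: $u^{-1/2}v^{-1/2}\,du=t^{-1/2}(1-t)^{-1/2}(1-w)^{-1}\cdot(1-w)\,dt=t^{-1/2}(1-t)^{-1/2}\,dt$, so that all $w$-dependence of the weight collapses and $t$ and $w$ decouple. Collecting the surviving powers of $(1-w)$ then produces the factorization
\[
I=\Delta\int_{0}^{1}T_{r}(t)T_{s}(t)\,t^{-1/2}(1-t)^{-1/2}\,dt\int_{0}^{1}Q_{n,r}(w)Q_{n,s}(w)(1-w)^{\gamma+r+s}\,dw.
\]

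Finally, since $r\neq s$, the first factor is exactly the Chebyshev-I orthogonality integral transported to $[0,1]$ via $x\mapsto 2t-1$, and hence equals zero; this forces $I=0$ irrespective of the value of the second factor. Here only $\gamma>-1$ is needed, solely to ensure absolute convergence of the $w$-integral, which explains why the hypothesis relaxes from $\gamma\geq 1$ in Theorem \ref{main} to $\gamma>-1$ here.

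The main obstacle --- indeed the only delicate point --- is verifying that the substitution separates the variables exactly, i.e.\ that the specific exponents $-\tfrac12$ on both $u$ and $v$ together with the Jacobian $(1-w)$ reconstitute precisely the Chebyshev weight $t^{-1/2}(1-t)^{-1/2}$ leaving no residual coupling between $t$ and $w$. Once this is confirmed the conclusion is immediate, and, notably, the argument does \emph{not} invoke Lemma \ref{faro}: the antisymmetry contributed by the Chebyshev factor under $r\neq s$ alone suffices, which is why the weaker constraint on $\gamma$ is admissible.
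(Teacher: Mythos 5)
Your proposal is correct and follows essentially the same route as the paper: substitute the closed form \eqref{univ-poly}, change variables $u=t(1-w)$ to factor $I$ into the Chebyshev-I orthogonality integral times a $w$-integral, and conclude from the vanishing of the first factor. Your added verification that the weight and Jacobian recombine into $t^{-1/2}(1-t)^{-1/2}\,dt$, and your remarks on why $\gamma>-1$ suffices here and why Lemma \ref{faro} is not needed, are accurate elaborations of the paper's argument rather than a different approach.
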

Therefore, the bivariate
polynomials $\mathscr{T}_{n,r}(u,v,w),$ $r=0,1,\ldots,n;$ $n\geq 0$ form an orthogonal system over the triangular domain $T$ with respect to the weight function $\mathrm{W}(u,v,w),$  $\gamma>-1.$

\section{Applications}\label{Orthog-poly-Berns-basis}
The Bernstein-B\'{e}zier form of curves and surfaces have some interesting geometric properties \cite{Farin,Hoschek}, which is very important in the
numerical computations. So, the orthogonal polynomials $\mathscr{T}_{n,r}(u,v,w),$ $r=0,1,\dots,n;$ $n\geq 0$ can be written in the following
Bernstein-B\'{e}zier form:
\begin{equation}\label{bezier}
\mathscr{T}_{n,r}(u,v,w)=\sum\limits_{\left|\zeta\right| =n}a_{\zeta}^{n,r}B_{\zeta}^{n}(u,v,w).
\end{equation}

We are interested in finding a closed form of the Bernstein coefficients $a_{\zeta}^{n,r}$ and derive a recursion relation that allow us to compute the
coefficients efficiently.

From equation \eqref{jacBiv}, $\mathscr{T}_{n,r}(u,v,w)$ has degree
$\leq n-r$ in the variable $w,$ so
\begin{equation} a_{ijk}^{n,r}=0 \hspace{.1in} \text{for} \hspace{.1in} k>n-r.
\end{equation}
For $0\leq k\leq n-r,$ the remaining coefficients are determined
by equating \eqref{jacBiv} and \eqref{bezier} as follows
\[\sum\limits_{i+j=n-k}a_{ijk}^{n,r}B_{ijk}^{n}(u,v,w)=(-1)^{k}\binom{n+r+1}{k}B_{k}^{n-r}(w,u+v)\sum\limits_{i=0}^{r}c(i)B_{i}^{r}(u,v).
\]
Comparing powers of $w$ on both sides, we have
\[\sum\limits_{i=0}^{n-k}a_{ijk}^{n,r}\frac{n!}{i!j!k!}u^{i}v^{j}=(-1)^{k} \binom{n+r+1}{k} \binom{n-r}{k}(u+v)^{n-r-k}\sum\limits_{i=0}^{r}c(i)B_{i}^{r}(u,v).\]
The left hand side of the last equation can be written in the form
\begin{equation*}
\sum\limits_{i=0}^{n-k}a_{ijk}^{n,r}\frac{n!(n-k)!}{i!(n-k-i)!k!(n-k)!}u^{i}v^{j}
=\sum\limits_{i=0}^{n-k}a_{ijk}^{n,r}\binom{n}{k}B_{i}^{n-k}(u,v).
\end{equation*}
Now, we get
\[\sum\limits_{i=0}^{n-k}a_{ijk}^{n,r}\binom{n}{k}B_{i}^{n-k}(u,v)=(-1)^{k}\binom{n+r+1}{k}\binom{n-r}{k} (u+v)^{n-r-k}\sum\limits_{i=0}^{r}c(i)B_{i}^{r}(u,v).
\]
With some binomial simplifications and using Lemma \ref{rabab2}, we get
\begin{equation}
\sum\limits_{i=0}^{n-k}a_{ijk}^{n,r}\binom{n}{k}B_{i}^{n-k}(u,v)
=(-1)^{k}\binom{n+r+1}{k}\binom{n-r}{k}\sum\limits_{i=0}^{r}M_{i,r}^{n-k}B_{i}^{n-k}(u,v),\end{equation}
where $M_{i,r}^{n-k}$  are the coefficients resulting from
writing Chebyshev-I polynomial of degree $r$ in the Bernstein basis of
degree $n-k,$ as defined by expression \eqref{coeff}. Thus, the required
Bernstein-B\'{e}zier coefficients given by the following theorem.

\begin{theorem}\label{berncoeff}
The Bernstein coefficients $a_{\zeta}^{n,r}$ of equation \eqref{bezier} are given explicitly by
\begin{equation}
a_{ijk}^{n,r}=\left\{\begin{array}{ll} (-1)^{k}\frac{\binom{n+r+1}{k}\binom{n-r}{k}}{\binom{n}{k}}
M_{i,r}^{n-k} & 0\leq k\leq n-r \\
0 & k > n-r
\end{array} \right.,
\end{equation}
where $M_{i,r}^{n-k}$ are given in \eqref{coeff}.
\end{theorem}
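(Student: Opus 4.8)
The plan is to match the two representations of $\mathscr{T}_{n,r}(u,v,w)$ coefficient by coefficient: the defining formula \eqref{jacBiv} and the target Bernstein--B\'{e}zier expansion \eqref{bezier}. The organizing observation is that the variable $w$ enters \eqref{jacBiv} only through the factor $\sum_{j=0}^{n-r}(-1)^{j}\binom{n+r+1}{j}B_{j}^{n-r}(w,u+v)$, so I would first isolate this $w$-dependence and then equate like powers of $w$ on the two sides.

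First I would record the degree bound in $w$. Since $B_{j}^{n-r}(w,u+v)=\binom{n-r}{j}w^{j}(u+v)^{n-r-j}$, the highest power of $w$ occurring in \eqref{jacBiv} is $w^{n-r}$, attained at $j=n-r$. Hence $\mathscr{T}_{n,r}$ has degree at most $n-r$ in $w$, which forces $a_{ijk}^{n,r}=0$ whenever $k>n-r$; this settles the second branch of the asserted formula at once.

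For the range $0\leq k\leq n-r$ I would extract the coefficient of $w^{k}$ from each side. On the Bernstein side the monomial $w^{k}$ is carried by the basis elements $B_{ijk}^{n}$ with $i+j=n-k$, while on the defining side it comes solely from the $j=k$ term of the $w$-sum, contributing the factor $(-1)^{k}\binom{n+r+1}{k}\binom{n-r}{k}(u+v)^{n-r-k}$ times $\sum_{i=0}^{r}c(i)B_{i}^{r}(u,v)$. After rewriting the left-hand multinomials $\frac{n!}{i!j!k!}u^{i}v^{j}$ in terms of the degree-$(n-k)$ bivariate Bernstein basis $B_{i}^{n-k}(u,v)$ (which introduces the factor $\binom{n}{k}$), both sides become expansions in the linearly independent functions $\{B_{i}^{n-k}(u,v)\}$, and matching these yields the $a_{ijk}^{n,r}$.

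The crux of the argument---and the step I expect to be the main obstacle---is identifying $(u+v)^{n-r-k}\sum_{i=0}^{r}c(i)B_{i}^{r}(u,v)$ with the degree-$(n-k)$ Bernstein expansion of the Chebyshev polynomial $T_{r}$. By construction $\sum_{i}c(i)B_{i}^{r}(u,v)$ is the homogenized degree-$r$ Bernstein form of $T_{r}\!\left(\frac{u}{u+v}\right)$ (this is exactly the normalization behind \eqref{univ-poly} and Lemma \ref{jacinBer form}, the proportionality of $c(i)$ to the degree-$r$ Bernstein coefficients being a routine check), so multiplying by $(u+v)^{n-r-k}$ merely degree-elevates it to $(u+v)^{n-k}T_{r}\!\left(\frac{u}{u+v}\right)$. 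Invoking Lemma \ref{rabab2}, this elevated form has Bernstein coefficients precisely $M_{i,r}^{n-k}$. Comparing the coefficient of $B_{i}^{n-k}(u,v)$ on both sides and dividing by $\binom{n}{k}$ then gives $a_{ijk}^{n,r}=(-1)^{k}\binom{n+r+1}{k}\binom{n-r}{k}\binom{n}{k}^{-1}M_{i,r}^{n-k}$, which is the first branch. The only care required is the bookkeeping in passing from multinomial to Bernstein normalization, i.e.\ the binomial simplification flagged in the text just before Lemma \ref{rabab2} is applied.
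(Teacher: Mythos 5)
Your proposal is correct and follows essentially the same route as the paper: establish the vanishing for $k>n-r$ from the degree bound in $w$, compare coefficients of $w^{k}$ (equivalently, the $j=k$ term of the $w$-sum against the basis elements $B_{ijk}^{n}$ with $i+j=n-k$), convert the multinomial normalization to the degree-$(n-k)$ Bernstein basis to produce the factor $\binom{n}{k}$, and invoke Lemma \ref{rabab2} to identify the degree-elevated form of $\sum_{i}c(i)B_{i}^{r}(u,v)$ with the coefficients $M_{i,r}^{n-k}$. No substantive differences from the paper's argument.
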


To derive a recurrence relation for the coefficients $a_{ijk}^{n,r}$ of $\mathscr{T}_{n,r}(u,v,w),$ 
consider the generalized Bernstein polynomial of degree $n-1,$
\begin{equation*}
B_{ijk}^{n-1}(u,v,w)=\frac{(i+1)}{n}B_{i+1,j,k}^{n}(u,v,w)+\frac{(j+1)}{n}B_{i,j+1,k}^{n}(u,v,w)+%
\frac{(k+1)}{n}B_{i,j,k+1}^{n}(u,v,w).
\end{equation*}
From the construction of $\mathscr{T}_{n,r}(u,v,w),$ we have $\langle
B_{ijk}^{n-1}(u,v,w),\mathscr{T}_{n,r}(u,v,w)\rangle=0,$ where $i+j+k=n-1.$
Using Lemma \ref{iffthm},
\begin{equation}\label{aa}
(i+1)a_{i+1,j,k}^{n,r}+(j+1)a_{i,j+1,k}^{n,r}+(k+1)a_{i,j,k+1}^{n,r}=0.
\end{equation}
By Theorem \ref{berncoeff}, we have
$a_{i,n-i,0}^{n,r}=M_{i,r}^{n}\mbox{ for } i=0,1,\ldots ,n.$
Thus, we can use \eqref{aa} to generate $a_{i,j,k}^{n,r}$ recursively on $k.$

\subsection*{Acknowledgments}
 The author would like to thank the anonymous referee for his/her comments that helped to improve this article.

\end{document}